\documentclass{amsart}
\usepackage{easywncy}
\newtheorem{thm}{Theorem}[section]
\newtheorem{prop}[thm]{Proposition}
\theoremstyle{definition}
\newtheorem{defn}[thm]{Definition}
\theoremstyle{remark}
\newtheorem{rem}[thm]{Remark}
\newcommand{\II}{\mathcal{I}}
\newcommand{\Q}{\mathbb{Q}}
\newcommand{\R}{\mathbb{R}}
\newcommand{\sha}{\mathbin{\mathcyr{sh}}}
\newcommand{\ve}[1]{\boldsymbol{#1}}
\begin{document}
\title[Polynomial generalization of the regularization theorem for MZVs]{Polynomial generalization of the regularization theorem for multiple zeta values}
\author{Minoru Hirose}
\address{Faculty of Mathematics, Kyushu University,
 744, Motooka, Nishi-ku, Fukuoka, 819-0395, Japan}
\email{m-hirose@math.kyushu-u.ac.jp}
\author{Hideki Murahara}
\address{Nakamura Gakuen University Graduate School,
 5-7-1, Befu, Jonan-ku, Fukuoka, 814-0198, Japan} 
\email{hmurahara@nakamura-u.ac.jp}
\author{Shingo Saito}
\address{Faculty of Arts and Science, Kyushu University,
 744, Motooka, Nishi-ku, Fukuoka, 819-0395, Japan}
\email{ssaito@artsci.kyushu-u.ac.jp}
\keywords{multiple zeta value; regularization theorem; symmetric multiple zeta value; symmetrized multiple zeta value; finite real multiple zeta value}
\subjclass[2010]{Primary 11M32; Secondary 05A19}
\begin{abstract}
 Ihara, Kaneko, and Zagier defined two regularizations of multiple zeta values and proved the regularization theorem that describes the relation between those regularizations.
 We show that the regularization theorem can be generalized to polynomials
 whose coefficients are regularizations of multiple zeta values and that specialize to symmetric multiple zeta values defined by Kaneko and Zagier.
\end{abstract}
\maketitle

\section{Introduction}
\subsection{Multiple zeta values and two products}
An \emph{index} is a finite (possibly empty) sequence of positive integers.
We denote by $I$ the set of all indices and by $\II$ the $\Q$-linear space spanned by the indices.
An index is said to be \emph{admissible} if either it is empty or its first component is greater than $1$.

If $\ve{k}=(k_1,\dots,k_r)$ is an admissible index, then we define the \emph{multiple zeta value} $\zeta(\ve{k})$ by
\[
 \zeta(\ve{k})=\sum_{m_1>\dots>m_r\ge1}\frac{1}{m_1^{k_1}\dotsm m_r^{k_r}}\in\R;
\]
we understand that $\zeta(\emptyset)=1$.
We adopt the convention that whenever we define a function on $I$, we automatically extend it $\Q$-linearly to $\II$.
We have therefore already defined, for example, $\zeta(-2(2,1)+3(4))$ as $-2\zeta(2,1)+3\zeta(4)$.

The definition immediately implies that the product of two multiple zeta values can be written as a $\Q$-linear combination of multiple zeta values; for instance
\begin{align*}
 \zeta(2)\zeta(3)
 &=\Biggl(\sum_{m=1}^{\infty}\frac{1}{m^2}\Biggr)\Biggl(\sum_{m=1}^{\infty}\frac{1}{m^3}\Biggr)
 =\sum_{m_1>m_2\ge1}\frac{1}{m_1^2m_2^3}+\sum_{m_1>m_2\ge1}\frac{1}{m_1^3m_2^2}+\sum_{m=1}^{\infty}\frac{1}{m^5}\\
 &=\zeta(2,3)+\zeta(3,2)+\zeta(5)=\zeta((2,3)+(3,2)+(5)).
\end{align*}
In order to capture this $\Q$-algebra structure of multiple zeta values,
we define a $\Q$-bilinear product $*$ on $\II$, known as the \emph{harmonic product} or \emph{stuffle product}, inductively by setting
\[
 \emptyset*(k_1,\dots,k_r)=(k_1,\dots,k_r)*\emptyset=(k_1,\dots,k_r)
\]
for all $(k_1,\dots,k_r)\in I$ and setting
\begin{align*}
 &(k_1,\dots,k_r)*(l_1,\dots,l_s)\\
 &=((k_1,\dots,k_r)*(l_1,\dots,l_{s-1}),l_s)+((k_1,\dots,k_{r-1})*(l_1,\dots,l_s),k_r)\\
 &\phantom{{}={}}+((k_1,\dots,k_{r-1})*(l_1,\dots,l_{s-1}),k_r+l_s)
\end{align*}
for all nonempty $(k_1,\dots,k_r),(l_1,\dots,l_s)\in I$.
The definition tells us that
\[
 (2)*(3)=((2)*\emptyset,3)+(\emptyset*(3),2)+(\emptyset*\emptyset,2+3)=(2,3)+(3,2)+(5),
\]
agreeing with the computation above.

Kontsevich observed (see \cite{Z}) that each multiple zeta value can be written as an iterated integral in the following fashion:
\begin{align*}
 &\zeta(k_1,\dots,k_r)\\
 &=\int_{1>t_1>\dots>t_{k_1+\dots+k_r}>0}\underbrace{\frac{dt_1}{t_1}\dotsm\frac{dt_{k_1-1}}{t_{k_1-1}}}_{k_1-1}\frac{dt_{k_1}}{1-t_{k_1}}\dotsm\underbrace{\frac{dt_{k_1+\dots+k_{r-1}+1}}{t_{k_1+\dots+k_{r-1}+1}}\dotsm\frac{dt_{k_1+\dots+k_r-1}}{t_{k_1+\dots+k_r-1}}}_{k_r-1}\frac{dt_{k_1+\dots+k_r}}{1-t_{k_1+\dots+k_r}}.
\end{align*}
The integral representation of multiple zeta values gives rise to another $\Q$-algebra structure, exemplified as below:
\begin{align*}
 \zeta(2)\zeta(3)
 &=\biggl(\int_{1>t_1>t_2>0}\frac{dt_1}{t_1}\frac{dt_2}{1-t_2}\biggr)\biggl(\int_{1>t_1>t_2>t_3>0}\frac{dt_1}{t_1}\frac{dt_2}{t_2}\frac{dt_3}{1-t_3}\biggr)\\
 &=\int_{1>t_1>t_2>t_3>t_4>t_5>0}\frac{dt_1}{t_1}\frac{dt_2}{1-t_2}\frac{dt_3}{t_3}\frac{dt_4}{t_4}\frac{dt_5}{1-t_5}\\
 &\phantom{{}={}}+3\int_{1>t_1>t_2>t_3>t_4>t_5>0}\frac{dt_1}{t_1}\frac{dt_2}{t_2}\frac{dt_3}{1-t_3}\frac{dt_4}{t_4}\frac{dt_5}{1-t_5}\\
 &\phantom{{}={}}+6\int_{1>t_1>t_2>t_3>t_4>t_5>0}\frac{dt_1}{t_1}\frac{dt_2}{t_2}\frac{dt_3}{t_3}\frac{dt_4}{1-t_4}\frac{dt_5}{1-t_5}\\
 &=\zeta(2,3)+3\zeta(3,2)+6\zeta(4,1)=\zeta((2,3)+3(3,2)+6(4,1)).
\end{align*}
This $\Q$-algebra structure is captured by the \emph{shuffle product} $\sha$, another $\Q$-bilinear product on $\II$ defined as follows.
We first associate to each index $\ve{k}=(k_1,\dots,k_r)$ a zero-one sequence
\[
 \varphi(\ve{k})=[\underbrace{0\dotsm0}_{k_1-1}1\dots\underbrace{0\dotsm0}_{k_r-1}1],
\]
and then define the shuffle product $\ve{k}\sha\ve{l}$ of indices $\ve{k}$ and $\ve{l}$
as the $\Q$-linear combination of indices corresponding to the $\Q$-linear combination of zero-one sequences
obtained by considering all ways of interleaving the two zero-one sequences $\varphi(\ve{k})$ and $\varphi(\ve{l})$.
For example, if we wish to find $(2)\sha(3)$, then we interleave $\varphi(2)=[01]$ and $\varphi(3)=[001]$
to obtain one $[01001]$, three $[00101]$s, and six $[00011]$s, so that
\[
 (2)\sha(3)=(2,3)+3(3,2)+6(4,1)
\]
agreeing with the computation above.

We therefore have two products $*$ and $\sha$ such that
\[
 \zeta(\ve{k})\zeta(\ve{l})=\zeta(\ve{k}*\ve{l})=\zeta(\ve{k}\sha\ve{l})
\]
for all admissible indices $\ve{k}$ and $\ve{l}$.

\subsection{Regularization theorem}
If $\ve{k}\in I$ is not admissible, then $\zeta(\ve{k})$ cannot be defined in the above-mentioned manner as the infinite sum diverges.
Ihara, Kaneko, and Zagier~\cite{IKZ} showed that we can uniquely define $\zeta^{*}(\ve{k};T),\zeta^{\sha}(\ve{k};T)\in\R[T]$, called the \emph{regularizations}, in such a way that
\begin{itemize}
 \item $\zeta^*(\ve{k};T)=\zeta^{\sha}(\ve{k};T)=\zeta(\ve{k})$ if $\ve{k}\in I$ is admissible;
 \item $\zeta^*(1;T)=\zeta^{\sha}(1;T)=T$;
 \item $\zeta^*(\ve{k};T)\zeta^*(\ve{l};T)=\zeta^*(\ve{k}*\ve{l};T)$ and $\zeta^{\sha}(\ve{k};T)\zeta^{\sha}(\ve{l};T)=\zeta^{\sha}(\ve{k}\sha\ve{l};T)$
       for all $\ve{k},\ve{l}\in I$.
\end{itemize}
They then proved the regularization theorem that describes the relationship between the two regularizations $\zeta^*(\ve{k};T)$ and $\zeta^{\sha}(\ve{k};T)$.
In order to state the theorem, we set
\[
 A(u)=\exp\Biggl(\sum_{n=2}^{\infty}\frac{(-1)^n}{n}\zeta(n)u^n\Biggr)\in\R[[u]]
\]
and define an $\R$-linear map $\rho\colon\R[T]\to\R[T]$ by
\[
 \rho(e^{Tu})=A(u)e^{Tu}
\]
in $\R[T][[u]]$ on which $\rho$ acts coefficientwise.

\begin{thm}[Regularization theorem, {\cite[Theorem~1]{IKZ}}]\label{thm:IKZ_reg}
 For $\ve{k}\in I$, we have
 \[
  \zeta^{\sha}(\ve{k};T)=\rho(\zeta^{*}(\ve{k};T)).
 \]
\end{thm}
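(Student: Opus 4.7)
The plan is to show that the $\Q$-linear map $\widetilde{\zeta}\colon\II\to\R[T]$ defined by $\widetilde{\zeta}(\ve{k})=\rho(\zeta^{*}(\ve{k};T))$ satisfies the three properties characterizing $\zeta^{\sha}$, so that uniqueness forces $\widetilde{\zeta}=\zeta^{\sha}$. Two of these properties are routine: since $\zeta^{*}(\ve{k};T)=\zeta(\ve{k})\in\R$ for admissible $\ve{k}$ and $A(0)=1$ implies that $\rho$ fixes $\R$, the maps $\widetilde{\zeta}$ and $\zeta$ agree on admissible indices. Extracting the coefficient of $u$ from $\rho(e^{Tu})=A(u)e^{Tu}$ and using $[u^{1}]A(u)=0$ yields $\rho(T)=T$, so $\widetilde{\zeta}((1))=T$.

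The heart of the proof is shuffle-multiplicativity of $\widetilde{\zeta}$, which I would establish via the generating-function identity
\[
 (\ast)\qquad\sum_{n\ge0}\zeta^{*}\bigl(\ve{k}_{0}\sha(1^{n});T\bigr)\,u^{n}=\zeta(\ve{k}_{0})\,A(u)^{-1}e^{Tu}
\]
for each admissible $\ve{k}_{0}\in I$. Applying $\rho$ coefficientwise to $(\ast)$ yields $\sum_{n}\widetilde{\zeta}(\ve{k}_{0}\sha(1^{n}))u^{n}=\zeta(\ve{k}_{0})e^{Tu}$, which matches $\sum_{n}\zeta^{\sha}(\ve{k}_{0}\sha(1^{n});T)u^{n}$ by the $\sha$-multiplicativity of $\zeta^{\sha}$. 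Since $(\II,\sha)$ is, by Radford's theorem, a polynomial ring in the generator $(1)$ over the span of admissible indices, the elements $\ve{k}_{0}\sha(1^{n})$ with $\ve{k}_{0}\in I$ admissible $\Q$-span $\II$, and the theorem follows by linearity.

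To prove $(\ast)$, the base case $\ve{k}_{0}=\emptyset$ comes from the Newton-identity-type factorization
\[
 \exp_{*}\bigl(u\cdot(1)\bigr)=\Bigl(\sum_{n\ge0}(1^{n})u^{n}\Bigr)*\exp_{*}\!\Bigl(\sum_{k\ge2}\frac{(-u)^{k}}{k}(k)\Bigr)
\]
in $(\II,*)[[u]]$, transparent in Hoffman's isomorphism of $(\II,*)$ with a polynomial algebra. Applying $\zeta^{*}$ coefficientwise, together with $*$-multiplicativity, $\zeta^{*}((1);T)=T$, and $\zeta^{*}((k);T)=\zeta(k)$ for $k\ge2$, the left side becomes $e^{Tu}$ and the second factor on the right becomes $A(u)$, yielding $\sum_{n}\zeta^{*}(1^{n};T)u^{n}=A(u)^{-1}e^{Tu}$. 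Setting $E(u)=\sum_{n}(1^{n})u^{n}$ and using $*$-multiplicativity of $\zeta^{*}$ directly gives $\zeta^{*}(\ve{k}_{0}*E(u);T)=\zeta(\ve{k}_{0})\,A(u)^{-1}e^{Tu}$, so $(\ast)$ reduces to the cancellation
\[
 \zeta^{*}\bigl(\ve{k}_{0}\sha E(u)-\ve{k}_{0}*E(u);T\bigr)=0.
\]

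This last cancellation is the main obstacle. The elements $\ve{k}_{0}\sha(1^{n})$ and $\ve{k}_{0}*(1^{n})$ have quite different expansions in $\II$, involving numerous non-admissible indices whose regularized $\zeta^{*}$-values must cancel simultaneously in both their $T$-dependent and constant parts. I would try to prove it by an induction on $n$ (with a secondary induction on the weight of $\ve{k}_{0}$) that unwinds the recursive definitions of $\sha$ and $*$, reducing each step to the base case together with lower-weight finite double-shuffle relations among admissible $\zeta$-values.
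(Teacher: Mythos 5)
The paper does not prove this theorem at all---it is quoted from \cite[Theorem~1]{IKZ}---so your attempt can only be measured against the proof there. Your reduction steps are all correct and cleanly organized: $\rho$ fixes constants and fixes $T$ (since $A(u)=1+O(u^2)$); the decomposition of $(\II,\sha)$ as a polynomial ring in $(1)$ over the span of the admissible indices shows that the elements $\ve{k}_0\sha(\{1\}^n)$ span $\II$; the Newton-identity computation giving $\sum_{n}\zeta^{*}(\{1\}^n;T)u^n=A(u)^{-1}e^{Tu}$ is right; and the theorem is indeed equivalent to your identity $(\ast)$, hence to the cancellation $\zeta^{*}\bigl(\ve{k}_0\sha(\{1\}^n)-\ve{k}_0*(\{1\}^n);T\bigr)=0$ for admissible $\ve{k}_0$.

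The gap is that this cancellation is not a technical lemma to be dispatched by induction: it \emph{is} the theorem, merely restated, and the route you sketch---unwinding the recursions and ``reducing to lower-weight finite double-shuffle relations among admissible $\zeta$-values''---provably cannot close it. Already the case $n=1$ is Hoffman's relation $\zeta(\ve{k}_0\sha(1)-\ve{k}_0*(1))=0$ (the difference is a combination of admissible indices; for $\ve{k}_0=(2)$ it reads $\zeta(2,1)=\zeta(3)$), and this does not follow from the finite double shuffle relations: in weight $3$ there is no product of two nonempty admissible indices at all, so the finite relations are vacuous there while Hoffman's relation is not. Some genuinely new input is unavoidable; in \cite{IKZ} it is the analytic comparison between the asymptotics of the harmonic partial sums over $M>m_1>\dots>m_r\ge1$ as $M\to\infty$ (where $T$ stands for $\log M+\gamma$) and those of the multiple polylogarithms as $z\to1^-$ (where $T$ stands for $-\log(1-z)$), which is precisely where the series $A(u)=e^{\gamma u}\Gamma(1+u)$ comes from. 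Without an argument of that kind, your proof reduces the statement to an equivalent unproven one.
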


\subsection{Statement of the main theorem}
Our main theorem is a polynomial generalization of the regularization theorem (Theorem~\ref{thm:IKZ_reg}).
The polynomial generalization of the multiple zeta values we shall be looking at is the following:
\begin{defn}
 For $\ve{k}=(k_1,\dots,k_r)\in I$ and $\bullet\in\{*,\sha\}$, we define
 \[
  \zeta_{x,y}^{\bullet}(\ve{k};T)=\sum_{i=0}^{r}x^{k_1+\dots+k_i}y^{k_{i+1}+\dots+k_r}\zeta^{\bullet}(k_i,\dots,k_1;T)\zeta^{\bullet}(k_{i+1},\dots,k_r;T)\in\R[x,y,T];
 \]
 we understand that $\zeta_{x,y}^{\bullet}(\emptyset;T)=1$.
\end{defn}

Note that $\zeta_{0,1}^{\bullet}(\ve{k};T)=\zeta^{\bullet}(\ve{k};T)$.

\begin{rem}
 Kaneko and Zagier~\cite{KZ} showed that $\zeta_{-1,1}^{*}(\ve{k};T)$ and $\zeta_{-1,1}^{\sha}(\ve{k};T)$
 are constants (i.e.\ independent of $T$) whose difference is $\zeta(2)$ times a $\Q$-linear combination of multiple zeta values,
 and called $\zeta_{-1,1}^{\bullet}(\ve{k};T)$ modulo $\zeta(2)$ the \emph{symmetric multiple zeta value}.
\end{rem}

\begin{rem}
 Although not used in this paper,
 it might be worthwhile to note that
 \[
  \zeta_{x,y}^{*}(\ve{k};T)\zeta_{x,y}^{*}(\ve{l};T)=\zeta_{x,y}^{*}(\ve{k}*\ve{l};T)
 \]
 for all $\ve{k},\ve{l}\in I$.
 Indeed, if $\ve{k}=(k_1,\dots,k_r)$ and $\ve{l}=(l_1,\dots,l_s)$, then
 \begin{align*}
  &\zeta_{x,y}^{*}(\ve{k};T)\zeta_{x,y}^{*}(\ve{l};T)\\
  &=\Biggl(\sum_{i=0}^{r}x^{k_1+\dots+k_i}y^{k_{i+1}+\dots+k_r}\zeta^{*}(k_i,\dots,k_1;T)\zeta^{*}(k_{i+1},\dots,k_r;T)\Biggr)\\
  &\phantom{{}={}}\times\Biggl(\sum_{j=0}^{s}x^{l_1+\dots+l_j}y^{l_{j+1}+\dots+l_s}\zeta^{*}(l_j,\dots,l_1;T)\zeta^{*}(l_{j+1},\dots,l_s;T)\Biggr)\\
  &=\sum_{i=0}^{r}\sum_{j=0}^{s}x^{k_1+\dots+k_i+l_1+\dots+l_j}y^{k_{i+1}+\dots+k_r+l_{j+1}+\dots+l_s}\\
  &\phantom{{}={}}\times\zeta^{*}(k_i,\dots,k_1;T)\zeta^{*}(k_{i+1},\dots,k_r;T)\zeta^{*}(l_j,\dots,l_1;T)\zeta^{*}(l_{j+1},\dots,l_s;T)\\
  &=\sum_{i=0}^{r}\sum_{j=0}^{s}x^{k_1+\dots+k_i+l_1+\dots+l_j}y^{k_{i+1}+\dots+k_r+l_{j+1}+\dots+l_s}\\
  &\phantom{{}={}}\times\zeta^{*}((k_i,\dots,k_1)*(l_j,\dots,l_1);T)\zeta^{*}((k_{i+1},\dots,k_r)*(l_{j+1},\dots,l_s);T)\\
  &=\zeta_{x,y}^{*}(\ve{k}*\ve{l};T).
 \end{align*}
 Here the last equality can be seen by observing that
 each summand of $\zeta_{x,y}^{*}(\ve{k}*\ve{l};T)$ comes from splitting, into two parts, an index that appears in the expansion of $\ve{k}*\ve{l}$
 and that such a summand can also be obtained by considering $(k_i,\dots,k_1)*(l_j,\dots,l_1)$ and $(k_{i+1},\dots,k_r)*(l_{j+1},\dots,l_s)$ for some $i$ and $j$.
 For example, if $r=5$ and $s=4$, then $\ve{k}*\ve{l}$ contains $(k_1,k_2+l_1,l_2,k_3,k_4+l_3,l_4,k_5)$ in its expansion
 and splitting it into $(k_1,k_2+l_1,l_2,k_3)$ and $(k_4+l_3,l_4,k_5)$ gives a summand
 \[
  x^{k_1+k_2+k_3+l_1+l_2}y^{k_4+k_5+l_3+l_4}\zeta^{*}(k_3,l_2,k_2+l_1,k_1;T)\zeta^{*}(k_4+l_3,l_4,k_5;T)
 \]
 in the expansion of $\zeta_{x,y}^{*}(\ve{k}*\ve{l};T)$.
 This summand can also be obtained by setting $i=3$ and $j=2$ and considering $(k_3,k_2,k_1)*(l_2,l_1)$ and $(k_4,k_5)*(l_3,l_4)$.
\end{rem}

We set
\[
 A_{x,y}(u)=\exp\Biggl(\sum_{n=2}^{\infty}\frac{(-1)^n}{n}\zeta(n)\frac{x^n+y^n}{(x+y)^n}u^n\Biggr)\in\R(x,y)[[u]]
\]
and define an $\R(x,y)$-linear map $\rho_{x,y}\colon\R(x,y)[T]\to\R(x,y)[T]$ by
\[
 \rho_{x,y}(e^{Tu})=A_{x,y}(u)e^{Tu}
\]
in $\R(x,y)[T][[u]]$ on which $\rho_{x,y}$ acts coefficientwise.
Note that $A_{0,1}(u)=A(u)$ and $\rho_{0,1}=\rho$.

\begin{thm}[Main theorem]\label{thm:main}
 For $\ve{k}\in I$, we have
 \[
  \zeta_{x,y}^{\sha}(\ve{k};T)=\rho_{x,y}(\zeta_{x,y}^{*}(\ve{k};T)).
 \]
\end{thm}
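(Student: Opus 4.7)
The plan is to bootstrap Theorem~\ref{thm:IKZ_reg} via a bivariate auxiliary polynomial together with a hidden translation symmetry of the definition of $\zeta_{x,y}^{\bullet}$.

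I begin by introducing the lift
\[
 \zeta_{x,y}^{\bullet}(\ve{k};T_1,T_2) := \sum_{i=0}^r x^{k_1+\dots+k_i}\, y^{k_{i+1}+\dots+k_r}\,\zeta^{\bullet}(k_i,\dots,k_1;T_1)\,\zeta^{\bullet}(k_{i+1},\dots,k_r;T_2),
\]
which satisfies $\zeta_{x,y}^{\bullet}(\ve{k};T,T) = \zeta_{x,y}^{\bullet}(\ve{k};T)$, and write $\widetilde{\rho}$ for the $\R(x,y)$-linear operator on $\R(x,y)[T_1,T_2]$ characterised by $\widetilde{\rho}(e^{T_1 u + T_2 v}) = A(u)A(v)\, e^{T_1 u + T_2 v}$, i.e.\ $\rho$ applied to each variable independently. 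Applying Theorem~\ref{thm:IKZ_reg} factorwise to each summand immediately gives
\[
 \zeta_{x,y}^{\sha}(\ve{k};T_1,T_2) = \widetilde{\rho}\bigl(\zeta_{x,y}^{*}(\ve{k};T_1,T_2)\bigr).
\]

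Next I exploit the factorisation $A_{x,y}(u) = A(\alpha u)\,A(\beta u)$ with $\alpha = x/(x+y)$ and $\beta = y/(x+y)$, which follows by splitting $(x^n+y^n)/(x+y)^n = \alpha^n + \beta^n$ in the exponent defining $A_{x,y}$. Together with $e^{Tu} = e^{T\alpha u}\,e^{T\beta u}$ (valid because $\alpha+\beta = 1$), applying $\hat P(d/du)|_{u=0}$ coefficient-by-coefficient to any $P \in \R(x,y)[T]$ yields, via the Leibniz rule, the identity
\[
 \rho_{x,y}(P(T)) = \bigl[\,\widetilde{\rho}\bigl(P(\alpha T_1 + \beta T_2)\bigr)\bigr]_{T_1 = T_2 = T}.
\]

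The heart of the proof is then the assertion that $\zeta_{x,y}^{\bullet}(\ve{k};T_1,T_2)$ depends on $T_1,T_2$ only through the linear combination $xT_1 + yT_2$, equivalently, is annihilated by the derivation $y\partial_{T_1} - x\partial_{T_2}$. The key input is the classical identity
\[
 \partial_T\zeta^{\bullet}(\ve{k};T) = \zeta^{\bullet}(D\ve{k};T),
\]
valid for both $\bullet = *$ and $\bullet = \sha$, where $D(k_1,\dots,k_r) = (k_2,\dots,k_r)$ if $k_1 = 1$ and $D(\ve{k}) = 0$ otherwise. Applying $y\partial_{T_1} - x\partial_{T_2}$ to the defining sum, the $T_1$-derivative produces terms indexed by positions $i$ with $k_i = 1$ (carrying an extra factor $y$), while the $T_2$-derivative produces terms indexed by positions $i$ with $k_{i+1} = 1$ (carrying an extra factor $x$); the reindexing $j = i-1$ identifies these two families termwise with matching exponents of $x$ and $y$, yielding cancellation. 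I expect this combinatorial telescoping to be the main technical step.

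With this dependence lemma, $\zeta_{x,y}^{*}(\ve{k};T_1,T_2) = \zeta_{x,y}^{*}(\ve{k};\alpha T_1 + \beta T_2)$ as polynomials in $T_1,T_2$. Substituting into the first displayed identity and specialising $T_1 = T_2 = T$ gives, via the second displayed identity with $P = \zeta_{x,y}^{*}(\ve{k};T)$,
\[
 \zeta_{x,y}^{\sha}(\ve{k};T) = \rho_{x,y}\bigl(\zeta_{x,y}^{*}(\ve{k};T)\bigr),
\]
which is Theorem~\ref{thm:main}.
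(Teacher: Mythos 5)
Your proof is correct, and it reaches the theorem by a route that is genuinely different in technique from the paper's, even though both rest on the same two inputs: the regularization theorem and the structure of the $T$-dependence of $\zeta^{\bullet}(\ve{k};T)$ from \cite[Proposition~10]{IKZ} (your derivative identity $\partial_T\zeta^{\bullet}(\ve{k};T)=\zeta^{\bullet}(D\ve{k};T)$ is exactly equivalent to that proposition, which the paper quotes as Proposition~\ref{prop:reg_coeff}). The paper packages all indices into noncommutative generating series $\Phi^{\bullet}$ and $\Phi_{x,y}^{\bullet}$, isolates the $T$-dependence as $\Phi^{\bullet}(T)=e^{TX_1}\Phi^{\bullet}(0)$, factors $\Phi_{x,y}^{\bullet}$ as a reversed series at $x$ times a series at $y$, applies $\rho$ once to the whole series, and finishes with $A(xX_1)A(yX_1)=A_{x,y}((x+y)X_1)$; your factorization $A_{x,y}(u)=A(\tfrac{x}{x+y}u)A(\tfrac{y}{x+y}u)$ is the same identity after the substitution $u=(x+y)X_1$. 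In place of the generating-series bookkeeping you introduce the bivariate lift $\zeta_{x,y}^{\bullet}(\ve{k};T_1,T_2)$, apply Theorem~\ref{thm:IKZ_reg} factorwise (legitimate precisely because $T_1$ and $T_2$ are separate variables, so $\widetilde{\rho}=\rho\otimes\rho$ acts on each factor independently), and prove as the key lemma that the lift is annihilated by $y\partial_{T_1}-x\partial_{T_2}$, hence is a polynomial in $xT_1+yT_2$ over $\R(x,y)$; I checked the telescoping, including the boundary terms $i=0$ and $i=r$ (which drop out since $\zeta^{\bullet}(\emptyset;T)$ is constant), and the exponent bookkeeping $a_{j+1}=a_j+1$, $b_{j+1}+1=b_j$ when $k_{j+1}=1$ is right. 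This lemma is, in substance, the statement that the paper extracts from Propositions~\ref{prop:isolate_T} and~\ref{prop:isolate_x} in the form $e^{xT_1X_1}\cdot e^{yT_2X_1}$, but your proof of it is elementary and index-by-index. What the paper's approach buys is uniformity and brevity (one application of $\rho$ to a single series, with the reversal handled by the antiautomorphism it calls $\alpha$ --- not to be confused with your scalar $\alpha=x/(x+y)$); what yours buys is the avoidance of the ring $\R[T]\langle\langle X_1,X_2,\dots\rangle\rangle$ altogether, at the cost of verifying the telescoping and the operator identity $\rho_{x,y}(P(T))=[\widetilde{\rho}(P(\alpha T_1+\beta T_2))]_{T_1=T_2=T}$ by hand; for the latter, the cleanest justification is linearity in $P$ plus a check on the generating polynomial $e^{Tu}$, which your sketch essentially performs.
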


\section{Proof of the main theorem}
Each $\ve{k}\in I$ can be written uniquely as $\ve{k}=(\{1\}^b,\ve{l})$, where $b$ is a nonnegative integer and $\ve{l}$ is an admissible index;
we write $b(\ve{k})$ for this $b$ and put $\ve{k}^j=(\{1\}^{b-j},\ve{l})$ for $j=0,\dots,b$.

\begin{prop}\label{prop:reg_coeff}
 For $\ve{k}\in I$ and $\bullet\in\{*,\sha\}$, we have
 \[
  \zeta^{\bullet}(\ve{k};T)=\sum_{j=0}^{b(\ve{k})}\zeta^{\bullet}(\ve{k}^j;0)\frac{T^j}{j!}.
 \]
\end{prop}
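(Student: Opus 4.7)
The plan is induction on $B:=b(\ve{k})$. The base case $B=0$ is immediate, since then $\ve{k}$ is admissible and both sides reduce to $\zeta(\ve{k})$. For the inductive step I will establish the stronger derivative identity
\begin{equation*}
 \frac{d}{dT}\zeta^{\bullet}(\ve{k};T)=\zeta^{\bullet}(\ve{k}^{1};T)\qquad(B\ge 1).
\end{equation*}
Iterating (and using the evident equality $(\ve{k}^{1})^{j-1}=\ve{k}^{j}$) gives $\frac{d^{j}}{dT^{j}}\zeta^{\bullet}(\ve{k};T)=\zeta^{\bullet}(\ve{k}^{j};T)$ for $0\le j\le B$. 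Since the $B$-th derivative $\zeta^{\bullet}(\ve{k}^{B};T)=\zeta(\ve{l})$ is constant, the polynomial $\zeta^{\bullet}(\ve{k};T)$ has degree at most $B$, and evaluating its Taylor expansion at $T=0$ yields the asserted formula.

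The algebraic input is the identity
\begin{equation*}
 (1)\bullet\ve{k}^{1}=B\ve{k}+R_{\ve{k}}\qquad\text{in }(\II,\bullet),
\end{equation*}
where $R_{\ve{k}}$ is a $\Q$-linear combination of indices all of $b$-value strictly less than $B$. For $\bullet=*$ this follows from direct inspection of the stuffle recursion: exactly $B$ of the summands in $(1)*\ve{k}^{1}$ arise by inserting the extra $1$ at one of the $B$ available positions within the initial block of leading $1$s, while every remaining summand either raises a leading $1$ to a $2$, inserts a $1$ inside the admissible tail $\ve{l}$, or increments an entry of $\ve{l}$; the case $\bullet=\sha$ is analogous, counting interleavings of $[1]$ with $\varphi(\ve{k}^{1})$. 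Applying the algebra homomorphism $\zeta^{\bullet}(\,\cdot\,;T)$ (using $\zeta^{\bullet}(1;T)=T$) gives
\begin{equation*}
 T\zeta^{\bullet}(\ve{k}^{1};T)=B\zeta^{\bullet}(\ve{k};T)+\zeta^{\bullet}(R_{\ve{k}};T).
\end{equation*}

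Let $\partial\colon\II\to\II$ be the $\Q$-linear map sending each index $\ve{m}$ to $\ve{m}^{1}$ if $b(\ve{m})\ge 1$ and to $0$ otherwise. Differentiating the displayed equation in $T$, using the induction hypothesis to replace $\frac{d}{dT}\zeta^{\bullet}(\ve{k}^{1};T)$ by $\zeta^{\bullet}(\ve{k}^{2};T)$ and $\frac{d}{dT}\zeta^{\bullet}(R_{\ve{k}};T)$ by $\zeta^{\bullet}(\partial R_{\ve{k}};T)$, and then eliminating $T\zeta^{\bullet}(\ve{k}^{2};T)$ via the analogous decomposition applied to $\ve{k}^{1}$, the desired derivative identity reduces to the purely combinatorial equality
\begin{equation*}
 \partial R_{\ve{k}}=R_{\ve{k}^{1}}\qquad\text{in }\II.
\end{equation*}
This equality is the main (and only) obstacle. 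It follows by a routine termwise comparison of the explicit residual descriptions above: every residual placement of the extra symbol in $(1)\bullet\ve{k}^{1}$ sits either strictly inside $\ve{l}$ or strictly within the leading $1$-block, and so commutes with $\partial$, yielding the corresponding residual term of $(1)\bullet\ve{k}^{2}$.
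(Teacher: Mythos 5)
Your argument is correct, but it takes a genuinely different route from the paper for the simple reason that the paper gives no argument at all: its entire proof is the citation \cite[Proposition~10]{IKZ}, so you have in effect reconstructed a self-contained proof of the imported statement. Your reduction is sound: the derivative identity $\frac{d}{dT}\zeta^{\bullet}(\ve{k};T)=\zeta^{\bullet}(\ve{k}^{1};T)$ is exactly equivalent to the Taylor-type formula in the Proposition, the decomposition $(1)\bullet\ve{k}^{1}=b(\ve{k})\,\ve{k}+R_{\ve{k}}$ holds for both products (for $*$, the $b(\ve{k})$ insertions of the new $1$ into the leading block each reproduce $\ve{k}$ and every other insertion or merge lowers the $b$-value; for $\sha$, exactly $b(\ve{k})$ of the interleavings of $\varphi(1)$ with $\varphi(\ve{k}^{1})$ yield $\varphi(\ve{k})$), and the elimination step does reduce everything to $\partial R_{\ve{k}}=R_{\ve{k}^{1}}$, which I have checked termwise and which is true. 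Two points deserve to be made explicit if you write this up. First, the case $b(\ve{k})=1$ needs separate wording, since $\ve{k}^{2}$ does not exist; there $R_{\ve{k}}$ is entirely admissible, $\partial R_{\ve{k}}=0$, and the differentiated relation already gives the claim. Second, your closing phrase that the residual placements ``commute with $\partial$'' is slightly too strong at the level of individual summands: among the stuffle terms that raise a leading $1$ to a $2$, the single admissible term $(2,\{1\}^{b(\ve{k})-2},\ve{l})$ is annihilated by $\partial$, and it is only after discarding it that the remaining $b(\ve{k})-2$ terms match those of $R_{\ve{k}^{1}}$ bijectively; the identity holds for the sums, not summand by summand. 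What your approach buys is a paper that does not lean on an external structural result; what the citation buys is brevity and consistency with \cite{IKZ}, where the statement comes out of the description of the full index algebra as a polynomial algebra in $(1)$ over the admissible subalgebra rather than from differentiation in $T$.
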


\begin{proof}
 See \cite[Proposition~10]{IKZ}.
\end{proof}

For $\ve{k}=(k_1,\dots,k_r)\in I$, we define
\[
 w(\ve{k};X_1,X_2,\dots)=X_{k_1}\dotsm X_{k_r}\in\R\langle X_1,X_2,\dots\rangle.
\]
We further define
\begin{align*}
 \Phi^{\bullet}(T;X_1,X_2,\dots)&=\sum_{\ve{k}\in I}\zeta^{\bullet}(\ve{k};T)w(\ve{k};X_1,X_2,\dots)\in\R[T]\langle\langle X_1,X_2,\dots\rangle\rangle,\\
 \Phi_{x,y}^{\bullet}(T;X_1,X_2,\dots)&=\sum_{\ve{k}\in I}\zeta_{x,y}^{\bullet}(\ve{k};T)w(\ve{k};X_1,X_2,\dots)\in\R[x,y,T]\langle\langle X_1,X_2,\dots\rangle\rangle
\end{align*}
for $\bullet\in\{*,\sha\}$.

\begin{prop}\label{prop:isolate_T}
 For $\bullet\in\{*,\sha\}$, we have
 \[
  \Phi^{\bullet}(T;X_1,X_2,\dots)=e^{TX_1}\Phi^{\bullet}(0;X_1,X_2,\dots).
 \]
\end{prop}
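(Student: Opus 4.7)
The plan is to use Proposition~\ref{prop:reg_coeff} to rewrite $\zeta^{\bullet}(\ve{k};T)$ explicitly as a polynomial in $T$ and then reorganize the resulting double sum so that the $T$-dependence factors out as $e^{TX_1}$.

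First, I would substitute the formula from Proposition~\ref{prop:reg_coeff} into the definition of $\Phi^{\bullet}$ to obtain
\[
 \Phi^{\bullet}(T;X_1,X_2,\dots)=\sum_{\ve{k}\in I}\sum_{j=0}^{b(\ve{k})}\zeta^{\bullet}(\ve{k}^j;0)\frac{T^j}{j!}w(\ve{k};X_1,X_2,\dots).
\]
Next I would reparametrize the outer sum: every $\ve{k}\in I$ is uniquely of the form $(\{1\}^b,\ve{l})$ with $b\ge 0$ and $\ve{l}$ admissible, in which case $w(\ve{k};X_1,X_2,\dots)=X_1^b w(\ve{l};X_1,X_2,\dots)$ and $\ve{k}^j=(\{1\}^{b-j},\ve{l})$. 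With this parametrization the sum becomes
\[
 \sum_{\ve{l}\text{ admissible}}\sum_{b=0}^{\infty}\sum_{j=0}^{b}\zeta^{\bullet}((\{1\}^{b-j},\ve{l});0)\frac{T^j}{j!}X_1^b w(\ve{l};X_1,X_2,\dots).
\]

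The key move is the change of variable $m=b-j$, which decouples the inner double sum into independent sums over $m\ge 0$ and $j\ge 0$. The factor $X_1^b=X_1^{m+j}$ splits as $X_1^m\cdot X_1^j$, so that the $j$-sum produces $\sum_{j\ge 0}(TX_1)^j/j!=e^{TX_1}$ and the remaining sum regroups, via the same admissible-vs-unit-tail parametrization, into
\[
 \sum_{\ve{l}\text{ admissible}}\sum_{m=0}^{\infty}\zeta^{\bullet}((\{1\}^m,\ve{l});0)X_1^m w(\ve{l};X_1,X_2,\dots)=\sum_{\ve{k}\in I}\zeta^{\bullet}(\ve{k};0)w(\ve{k};X_1,X_2,\dots),
\]
which is exactly $\Phi^{\bullet}(0;X_1,X_2,\dots)$.

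There is no real obstacle: the proof is essentially a clean bookkeeping exercise, with the only point requiring any care being the legitimacy of swapping the order of summation in the noncommutative formal power series ring (which is automatic since for each multidegree in the $X_i$ only finitely many $\ve{k}$ contribute). The heart of the argument is the observation that the dependence of $w(\ve{k};X_1,X_2,\dots)$ on the leading string of $1$'s is exactly $X_1^{b(\ve{k})}$, which matches the shape of Proposition~\ref{prop:reg_coeff} perfectly.
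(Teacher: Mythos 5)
Your proposal is correct and follows essentially the same route as the paper: substitute Proposition~\ref{prop:reg_coeff}, use the fact that $w(\ve{k};X_1,X_2,\dots)=X_1^{j}w(\ve{k}^j;X_1,X_2,\dots)$ to factor out the powers of $X_1$, and reindex the double sum so that it decouples into $e^{TX_1}$ times $\Phi^{\bullet}(0;X_1,X_2,\dots)$. Your change of variables $m=b-j$ over admissible tails is just a slightly more explicit way of writing the paper's reindexing over $\ve{l}=\ve{k}^j\in I$, and your remark about summability per multidegree is a fine (if unstated in the paper) justification for the rearrangement.
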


\begin{proof}
 Proposition~\ref{prop:reg_coeff} shows that
 \begin{align*}
  \Phi^{\bullet}(T;X_1,X_2,\dots)&=\sum_{\ve{k}\in I}\zeta^{\bullet}(\ve{k};T)w(\ve{k};X_1,X_2,\dots)\\
  &=\sum_{\ve{k}\in I}\Biggl(\sum_{j=0}^{b(\ve{k})}\zeta^{\bullet}(\ve{k}^j;0)\frac{T^j}{j!}\Biggr)w(\ve{k};X_1,X_2,\dots)\\
  &=\sum_{\ve{k}\in I}\sum_{j=0}^{b(\ve{k})}\zeta^{\bullet}(\ve{k}^j;0)\frac{T^j}{j!}X_1^jw(\ve{k}^j;X_1,X_2,\dots)\\
  &=\sum_{j=0}^{\infty}\sum_{\ve{l}\in I}\zeta^{\bullet}(\ve{l};0)\frac{T^j}{j!}X_1^jw(\ve{l};X_1,X_2,\dots)\\
  &=\Biggl(\sum_{j=0}^{\infty}\frac{T^jX_1^j}{j!}\Biggr)\Biggl(\sum_{\ve{l}\in I}\zeta^{\bullet}(\ve{l};0)w(\ve{l};X_1,X_2,\dots)\Biggr)\\
  &=e^{TX_1}\Phi^{\bullet}(0;X_1,X_2,\dots).\qedhere
 \end{align*}
\end{proof}

\begin{prop}\label{prop:sha_*}
 We have
 \[
  \Phi^{\sha}(T;X_1,X_2,\dots)=A(X_1)e^{TX_1}\Phi^{*}(0;X_1,X_2,\dots).
 \]
\end{prop}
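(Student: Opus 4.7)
The plan is to combine the regularization theorem (Theorem~\ref{thm:IKZ_reg}) with Proposition~\ref{prop:isolate_T}. First I would extend the $\R$-linear map $\rho$ coefficientwise to $\R[T]\langle\langle X_1,X_2,\dots\rangle\rangle$, treating the noncommutative formal variables $X_1,X_2,\dots$ as scalars on which $\rho$ acts trivially. This extension is clearly well defined and $\R\langle\langle X_1,X_2,\dots\rangle\rangle$-linear.

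Next, applying Theorem~\ref{thm:IKZ_reg} coefficient-by-coefficient to the defining sum of $\Phi^{\sha}$, I would obtain
\[
 \Phi^{\sha}(T;X_1,X_2,\dots)=\sum_{\ve{k}\in I}\rho\bigl(\zeta^{*}(\ve{k};T)\bigr)w(\ve{k};X_1,X_2,\dots)=\rho\bigl(\Phi^{*}(T;X_1,X_2,\dots)\bigr).
\]

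Then I would invoke Proposition~\ref{prop:isolate_T} with $\bullet=*$ to rewrite $\Phi^{*}(T;X_1,X_2,\dots)=e^{TX_1}\Phi^{*}(0;X_1,X_2,\dots)$, and use the fact that $\Phi^{*}(0;X_1,X_2,\dots)$ is independent of $T$ to pull it past $\rho$. Since $\rho$ acts on $\R[T][[u]]$ by $\rho(e^{Tu})=A(u)e^{Tu}$, specializing the formal variable $u$ to $X_1$ gives $\rho(e^{TX_1})=A(X_1)e^{TX_1}$. Combining, this yields
\[
 \rho\bigl(e^{TX_1}\Phi^{*}(0;X_1,X_2,\dots)\bigr)=A(X_1)e^{TX_1}\Phi^{*}(0;X_1,X_2,\dots),
\]
which is exactly the claim.

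There is no serious obstacle; the only subtlety is the sleight-of-hand of specializing the parameter $u$ in the definition of $\rho$ to the formal variable $X_1$, but this is legitimate since $\rho$ is defined via an identity in $\R[T][[u]]$ that holds under substitution by any element of an $\R[T]$-algebra in which the right-hand side converges, and here everything takes place in the formal series ring $\R[T]\langle\langle X_1,X_2,\dots\rangle\rangle$ where $X_1$ has no constant term.
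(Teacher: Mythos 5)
Your proposal is correct and follows essentially the same route as the paper: extend $\rho$ coefficientwise to $\R[T]\langle\langle X_1,X_2,\dots\rangle\rangle$, deduce $\Phi^{\sha}=\rho(\Phi^{*})$ from Theorem~\ref{thm:IKZ_reg}, apply Proposition~\ref{prop:isolate_T}, and evaluate $\rho(e^{TX_1})=A(X_1)e^{TX_1}$. Your explicit remark on the legitimacy of substituting $u=X_1$ is a point the paper leaves implicit, but the argument is the same.
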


\begin{proof}
 If we extend $\rho\colon\R[T]\to\R[T]$ coefficientwise to a map from $\R[T]\langle\langle X_1,X_2,\dots\rangle\rangle$ to itself,
 then the regularization theorem (Theorem~\ref{thm:IKZ_reg}) shows that
 \[
  \Phi^{\sha}(T;X_1,X_2,\dots)=\rho(\Phi^{*}(T;X_1,X_2,\dots)).
 \]
 Therefore it follows from Proposition~\ref{prop:isolate_T} that
 \begin{align*}
  \Phi^{\sha}(T;X_1,X_2,\dots)
  &=\rho(\Phi^{*}(T;X_1,X_2,\dots))\\
  &=\rho(e^{TX_1}\Phi^{*}(0;X_1,X_2,\dots))\\
  &=\rho(e^{TX_1})\Phi^{*}(0;X_1,X_2,\dots)\\
  &=A(X_1)e^{TX_1}\Phi^{*}(0;X_1,X_2,\dots).\qedhere
 \end{align*}
\end{proof}

Define an $\R[x,y,T]$-linear map $\alpha$ from $\R[x,y,T]\langle\langle X_1,X_2,\dots\rangle\rangle$ to itself
by setting $\alpha(X_{k_1}\dotsm X_{k_r})=X_{k_r}\dotsm X_{k_1}$ for $\ve{k}=(k_1,\dots,k_r)\in I$.

\begin{prop}\label{prop:isolate_x}
 For $\bullet\in\{*,\sha\}$, we have
 \[
  \Phi_{x,y}^{\bullet}(T;X_1,X_2,\dots)=\alpha(\Phi^{\bullet}(T;x^1X_1,x^2X_2,\dots))\Phi^{\bullet}(T;y^1X_1,y^2X_2,\dots).
 \]
\end{prop}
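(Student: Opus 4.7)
The plan is to unpack the definition of $\Phi_{x,y}^{\bullet}$ and recognize it as a product of two shifted copies of $\Phi^{\bullet}$, one of which has its letters reversed by $\alpha$. Starting from the definition, the proposition reduces to
\[
 \sum_{\ve{k}=(k_1,\dots,k_r)\in I}\sum_{i=0}^{r}x^{k_1+\dots+k_i}y^{k_{i+1}+\dots+k_r}\zeta^{\bullet}(k_i,\dots,k_1;T)\zeta^{\bullet}(k_{i+1},\dots,k_r;T)X_{k_1}\dotsm X_{k_r}.
\]
The key reparametrization is to view the pair $(\ve{k},i)$, with $0\le i\le r$, as determining (and being determined by) an ordered pair $(\ve{k}',\ve{k}'')\in I\times I$, where $\ve{k}'=(k_1,\dots,k_i)$ and $\ve{k}''=(k_{i+1},\dots,k_r)$. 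Under this bijection the summand factors cleanly: the monomial $X_{k_1}\dotsm X_{k_r}$ splits as $w(\ve{k}';X_1,X_2,\dots)\,w(\ve{k}'';X_1,X_2,\dots)$ in that order, the $\zeta^{\bullet}$-values factor as $\zeta^{\bullet}(\overline{\ve{k}'};T)\zeta^{\bullet}(\ve{k}'';T)$ (writing $\overline{\cdot}$ for the reversal), and the $x,y$ weights become $x^{|\ve{k}'|}y^{|\ve{k}''|}$.

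Next, I would absorb the scalar weights into the letters via the identity $x^{|\ve{k}'|}w(\ve{k}';X_1,X_2,\dots)=w(\ve{k}';xX_1,x^2X_2,\dots)$, and similarly with $y$ for $\ve{k}''$. Since the $\zeta^{\bullet}$-coefficients lie in $\R[T]$ and commute past every $X_n$, the double sum factors into a product of two single sums,
\[
 \Biggl(\sum_{\ve{k}'\in I}\zeta^{\bullet}(\overline{\ve{k}'};T)\,w(\ve{k}';xX_1,x^2X_2,\dots)\Biggr)\Biggl(\sum_{\ve{k}''\in I}\zeta^{\bullet}(\ve{k}'';T)\,w(\ve{k}'';yX_1,y^2X_2,\dots)\Biggr).
\]
The second factor is already $\Phi^{\bullet}(T;yX_1,y^2X_2,\dots)$ by definition. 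For the first, I would substitute $\ve{l}=\overline{\ve{k}'}$; since $w(\overline{\ve{l}};xX_1,x^2X_2,\dots)=\alpha(w(\ve{l};xX_1,x^2X_2,\dots))$ (the map $\alpha$ reverses monomials and commutes with the $x$-scalars because it is $\R[x,y,T]$-linear), and since $\alpha$ can be pulled out of the sum by that same linearity, the first factor equals $\alpha(\Phi^{\bullet}(T;xX_1,x^2X_2,\dots))$, which yields the claimed identity.

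The whole argument is bookkeeping, and I do not expect any serious obstacle. The only point demanding care is that $\R[x,y,T]\langle\langle X_1,X_2,\dots\rangle\rangle$ is noncommutative, so the order of the two factors arising from the split at position $i$ must be preserved throughout; in particular $\alpha$ must be applied to the left factor (not the right) in order to undo the reversal built into the definition of $\zeta_{x,y}^{\bullet}$.
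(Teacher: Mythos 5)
Your proposal is correct and follows essentially the same route as the paper: expand the definition, split the index at position $i$ into an ordered pair, absorb the powers of $x$ and $y$ into the letters, factor the sum into a product, and identify the left factor as $\alpha$ applied to $\Phi^{\bullet}(T;xX_1,x^2X_2,\dots)$ via reversal. Your explicit remarks on noncommutativity and on applying $\alpha$ to the left factor match the (more tersely written) computation in the paper.
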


\begin{proof}
 We have
 \begin{align*}
  &\Phi_{x,y}^{\bullet}(T;X_1,X_2,\dots)\\
  &=\sum_{\ve{k}\in I}\zeta_{x,y}^{\bullet}(\ve{k};T)w(\ve{k};X_1,X_2,\dots)\\
  &=\sum_{r=0}^{\infty}\sum_{k_1,\dots,k_r=1}^{\infty}\sum_{i=0}^{r}x^{k_1+\dots+k_i}y^{k_{i+1}+\dots+k_r}\zeta^{\bullet}(k_i,\dots,k_1;T)\zeta^{\bullet}(k_{i+1},\dots,k_r;T)X_{k_1}\dotsm X_{k_r}\\
  &=\Biggl(\sum_{i=0}^{\infty}\sum_{k_1,\dots,k_i=1}^{\infty}\zeta^{\bullet}(k_i,\dots,k_1;T)(x^{k_1}X_{k_1})\dotsm(x^{k_i}X_{k_i})\Biggr)\\
  &\phantom{{}={}}\times\Biggl(\sum_{j=0}^{\infty}\sum_{l_1,\dots,l_j=1}^{\infty}\zeta^{\bullet}(l_1,\dots,l_j;T)(y^{l_1}X_{l_1})\dotsm(y^{l_j}X_{l_j})\Biggr)\\
  &=\alpha(\Phi^{\bullet}(T;x^1X_1,x^2X_2,\dots))\Phi^{\bullet}(T;y^1X_1,y^2X_2,\dots).\qedhere
 \end{align*}
\end{proof}

\begin{prop}\label{prop:key}
 We have
 \begin{align*}
  \Phi_{x,y}^{*}(T;X_1,X_2,\dots)&=\alpha(\Phi^{*}(0;x^1X_1,x^2X_2,\dots))e^{(x+y)TX_1}\Phi^{*}(0;y^1X_1,y^2X_2,\dots),\\
  \Phi_{x,y}^{\sha}(T;X_1,X_2,\dots)&=\alpha(\Phi^{*}(0;x^1X_1,x^2X_2,\dots))A_x((x+y)X_1)e^{(x+y)TX_1}\Phi^{*}(0;y^1X_1,y^2X_2,\dots).
 \end{align*}
\end{prop}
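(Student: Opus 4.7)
The plan is to derive both identities from Proposition~\ref{prop:isolate_x}, which expresses $\Phi^{\bullet}_{x,y}(T;X_1,X_2,\dots)$ as the product $\alpha(\Phi^{\bullet}(T;xX_1,x^2X_2,\dots))\,\Phi^{\bullet}(T;yX_1,y^2X_2,\dots)$, by substituting the formulas for the one-variable series that have already been established. For the $*$-identity I would plug in Proposition~\ref{prop:isolate_T} on each factor, converting it to $e^{TX_1}\Phi^{*}(0;\dots)$; for the $\sha$-identity I would plug in Proposition~\ref{prop:sha_*}, which additionally produces a factor $A(X_1)$ in front of $e^{TX_1}\Phi^{*}(0;\dots)$.

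The only non-cosmetic step is handling $\alpha$. The key observation is that $\alpha$ is an $\R[x,y,T]$-linear anti-automorphism of $\R[x,y,T]\langle\langle X_1,X_2,\dots\rangle\rangle$, so $\alpha(fg)=\alpha(g)\alpha(f)$, and moreover $\alpha$ fixes every power series in $X_1$ alone because $\alpha(X_1^j)=X_1^j$. Applying this to the left factor allows me to pull the $X_1$-only prefactors $e^{TxX_1}$, and in the $\sha$ case also $A(xX_1)$, through $\alpha$ so that they end up on the \emph{right} of $\alpha(\Phi^{*}(0;xX_1,x^2X_2,\dots))$. Since the resulting $X_1$-only factors all commute with one another, they can be freely gathered and multiplied.

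For the $*$-formula, the product of the two exponentials $e^{TxX_1}$ and $e^{TyX_1}$ is $e^{(x+y)TX_1}$, yielding the first identity immediately. For the $\sha$-formula, the analogous collection gives $A(xX_1)\,A(yX_1)\,e^{(x+y)TX_1}$, so it remains to identify $A(xX_1)\,A(yX_1)$ with the displayed $A$-factor. This is immediate from the definitions: both $A(xX_1)\,A(yX_1)$ and $A_{x,y}((x+y)X_1)$ equal $\exp\bigl(\sum_{n\ge 2}\frac{(-1)^n}{n}\zeta(n)(x^n+y^n)X_1^n\bigr)$, as one sees by collecting the exponents $(xX_1)^n$ and $(yX_1)^n$ on one side and using $\frac{x^n+y^n}{(x+y)^n}\cdot(x+y)^n=x^n+y^n$ on the other.

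The main obstacle, such as it is, is the bookkeeping around $\alpha$; once its anti-automorphism property and the fact that it fixes $X_1$-only series are invoked, the rest is a routine assembly of Propositions~\ref{prop:isolate_T}, \ref{prop:sha_*}, and~\ref{prop:isolate_x}.
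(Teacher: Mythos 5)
Your proposal is correct and follows essentially the same route as the paper: substitute Propositions~\ref{prop:isolate_T} and~\ref{prop:sha_*} into the factorization of Proposition~\ref{prop:isolate_x}, move the $X_1$-only prefactors through $\alpha$ using its anti-automorphism property, and combine $e^{xTX_1}e^{yTX_1}$ and $A(xX_1)A(yX_1)=A_{x,y}((x+y)X_1)$. Your explicit remark that $\alpha$ reverses products and fixes series in $X_1$ alone is exactly the step the paper performs implicitly, so nothing is missing.
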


\begin{proof}
 Propositions \ref{prop:isolate_T} and \ref{prop:isolate_x} show that
 \begin{align*}
  \Phi_{x,y}^{*}(T;X_1,X_2,\dots)
  &=\alpha(\Phi^{*}(T;x^1X_1,x^2X_2,\dots))\Phi^{*}(T;y^1X_1,y^2X_2,\dots)\\
  &=\alpha(e^{xTX_1}\Phi^{*}(0;x^1X_1,x^2X_2,\dots))e^{yTX_1}\Phi^{*}(0;y^1X_1,y^2X_2,\dots)\\
  &=\alpha(\Phi^{*}(0;x^1X_1,x^2X_2,\dots))e^{xTX_1}e^{yTX_1}\Phi^{*}(0;y^1X_1,y^2X_2,\dots)\\
  &=\alpha(\Phi^{*}(0;x^1X_1,x^2X_2,\dots))e^{(x+y)TX_1}\Phi^{*}(0;y^1X_1,y^2X_2,\dots),
 \end{align*}
 which is the first desired equality.

 In a similar manner, Propositions \ref{prop:sha_*} and \ref{prop:isolate_x} show that
 \begin{align*}
  &\Phi_{x,y}^{\sha}(T;X_1,X_2,\dots)\\
  &=\alpha(\Phi^{\sha}(T;x^1X_1,x^2X_2,\dots))\Phi^{\sha}(T;y^1X_1,y^2X_2,\dots)\\
  &=\alpha(A(xX_1)e^{xTX_1}\Phi^{*}(0;x^1X_1,x^2X_2,\dots))A(yX_1)e^{yTX_1}\Phi^{*}(0;y^1X_1,y^2X_2,\dots)\\
  &=\alpha(\Phi^{*}(0;x^1X_1,x^2X_2,\dots))e^{xTX_1}A(xX_1)A(yX_1)e^{yTX_1}\Phi^{*}(0;y^1X_1,y^2X_2,\dots).
 \end{align*}
 Since $e^{xTX_1}$, $A(xX_1)$, $A(yX_1)$, and $e^{yTX_1}$ are commutative with each other and since
 \begin{align*}
  A(xX_1)A(yX_1)
  &=\exp\Biggl(\sum_{n=2}^{\infty}\frac{(-1)^n}{n}\zeta(n)x^nX_1^n\Biggr)\exp\Biggl(\sum_{n=2}^{\infty}\frac{(-1)^n}{n}\zeta(n)y^nX_1^n\Biggr)\\
  &=\exp\Biggl(\sum_{n=2}^{\infty}\frac{(-1)^n}{n}\zeta(n)(x^n+y^n)X_1^n\Biggr)\\
  &=A_{x,y}((x+y)X_1),
 \end{align*}
 we have proved the second desired equality.
\end{proof}

\begin{thm}\label{thm:main_gen_func}
 We have
 \[
  \Phi_{x,y}^{\sha}(T;X_1,X_2,\dots)=\rho_{x,y}(\Phi_{x,y}^{*}(T;X_1,X_2,\dots)).
 \]
\end{thm}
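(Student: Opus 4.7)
The plan is to deduce the theorem almost immediately from Proposition \ref{prop:key} by observing that $\rho_{x,y}$ interacts very cleanly with the factorizations given there. The key point is that Proposition \ref{prop:key} puts all $T$-dependence of $\Phi_{x,y}^{*}$ into the single middle factor $e^{(x+y)TX_1}$, so applying $\rho_{x,y}$ reduces to a one-line computation.

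First I would extend $\rho_{x,y}$ coefficientwise from $\R(x,y)[T]$ to the larger ring $\R(x,y)[T]\langle\langle X_1,X_2,\dots\rangle\rangle$, so that it becomes linear over $\R(x,y)\langle\langle X_1,X_2,\dots\rangle\rangle$ and in particular fixes every element that does not involve $T$. Applying this extension to the first identity of Proposition \ref{prop:key}, the two outer factors $\alpha(\Phi^{*}(0;x^1X_1,x^2X_2,\dots))$ and $\Phi^{*}(0;y^1X_1,y^2X_2,\dots)$ pass through $\rho_{x,y}$ untouched, leaving
\[
\rho_{x,y}(\Phi_{x,y}^{*}(T;X_1,X_2,\dots))=\alpha(\Phi^{*}(0;x^1X_1,x^2X_2,\dots))\,\rho_{x,y}(e^{(x+y)TX_1})\,\Phi^{*}(0;y^1X_1,y^2X_2,\dots).
\]

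Next I would evaluate $\rho_{x,y}(e^{(x+y)TX_1})$ by substituting $u=(x+y)X_1$ into the defining identity $\rho_{x,y}(e^{Tu})=A_{x,y}(u)e^{Tu}$ of $\R(x,y)[T][[u]]$. The substitution is legitimate since $(x+y)X_1$ has no constant term in $X_1$, and the rational function $(x^n+y^n)/(x+y)^n$ in the exponent of $A_{x,y}$ telescopes against the factor $(x+y)^n$ produced by $u^n=((x+y)X_1)^n$, so that $A_{x,y}((x+y)X_1)\in\R[x,y][[X_1]]$ as required. This gives $\rho_{x,y}(e^{(x+y)TX_1})=A_{x,y}((x+y)X_1)e^{(x+y)TX_1}$, and plugging back yields exactly the second identity of Proposition \ref{prop:key}, namely $\Phi_{x,y}^{\sha}(T;X_1,X_2,\dots)$.

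Consequently there is essentially no obstacle: Proposition \ref{prop:key} did the real work of isolating the $T$-dependence, and all that remains is a bookkeeping check that the extension of $\rho_{x,y}$ commutes past $\alpha$ and past multiplication by $T$-free series, together with the verification that the substitution $u=(x+y)X_1$ is well-defined and produces the claimed series. The mildest subtlety worth mentioning in the write-up is that although $A_{x,y}(u)$ a priori has coefficients in $\R(x,y)$, after this substitution the result lies in $\R[x,y][[X_1]]$, so the final identity indeed takes place in $\R[x,y,T]\langle\langle X_1,X_2,\dots\rangle\rangle$.
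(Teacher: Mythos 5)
Your proposal is correct and follows essentially the same route as the paper: both deduce the theorem from Proposition~\ref{prop:key} by extending $\rho_{x,y}$ coefficientwise, letting it pass through the $T$-free outer factors, and computing $\rho_{x,y}(e^{(x+y)TX_1})=A_{x,y}((x+y)X_1)e^{(x+y)TX_1}$ to match the second identity of that proposition. Your added remark that the substitution $u=(x+y)X_1$ clears the denominators $(x+y)^n$ so everything lands in $\R[x,y][[X_1]]$ is a fine bookkeeping point that the paper leaves implicit.
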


\begin{proof}
 Proposition~\ref{prop:key} shows that
 \begin{align*}
  &\Phi_{x,y}^{\sha}(T;X_1,X_2,\dots)\\
  &=\alpha(\Phi^{*}(0;x^1X_1,x^2X_2,\dots))A_{x,y}((x+y)X_1)e^{(x+y)TX_1}\Phi^{*}(0;y^1X_1,y^2X_2,\dots)\\
  &=\alpha(\Phi^{*}(0;x^1X_1,x^2X_2,\dots))\rho_{x,y}(e^{(x+y)TX_1})\Phi^{*}(0;y^1X_1,y^2X_2,\dots)\\
  &=\rho_{x,y}(\alpha(\Phi^{*}(0;x^1X_1,x^2X_2,\dots))e^{(x+y)TX_1}\Phi^{*}(0;y^1X_1,y^2X_2,\dots))\\
  &=\rho_{x,y}(\Phi_{x,y}^{*}(T;X_1,X_2,\dots)).\qedhere
 \end{align*}
\end{proof}

Our main theorem (Theorem~\ref{thm:main}) immediately follows from Theorem~\ref{thm:main_gen_func}.

\section*{Acknowledgements}
This work was supported by JSPS KAKENHI Grant Numbers JP18J00982, JP18K03243, and JP18K13392.
The authors wish to express their gratitude to Yoshinori Yamasaki for helpful comments.

\end{document}